\title{\bf Sharp estimates for the gradient of the generalized Poisson integral for a half-space}
\author{\sc{Gershon Kresin$^a\!\!$}
\thanks{e-mail: kresin@ariel.ac.il}$\;\;$
and \sc{Vladimir Maz'ya$^b$}
\thanks{e-mail: vladimir.mazya@liu.se}$\;\;$
\\ \\
{\it{$^a$Department of Mathematics, Ariel University, Ariel 40700, Israel}}\\
{\it{$^b$Department of Mathematical Sciences, University of Liverpool,
M$\&$O Building, Liverpool,}}\\ 
{\it{L69 3BX, UK; Department of Mathematics, Link\"oping University,SE-58183 Link\"oping,}}\\ 
{\it{{\hskip -145mm}Sweden}}
}
{ \date\ }
\numberwithin{equation}{section}
\newtheorem{theorem}{Theorem}
\newtheorem{proposition}[theorem]{Proposition}
\newenvironment{remark}{{\bf Remark}}
\newcommand{\bs}{\boldsymbol}
\begin{document}
\maketitle
\large
\centerline{\sl Dedicated to Vakhtang Kokilashvili on the occasion of his 80th birthday}
\vspace{10mm}

{\bf Abstract.} A representation of the sharp coefficient in a pointwise estimate for the gradient 
of the generalized Poisson integral of a function $f$ on ${\mathbb R}^n$ is obtained under the assumption that
$f$ belongs to  $L^p$. The explicit value of the coefficient is found for the cases $p=1$ and $p=2$.
\\
\\
{\bf Keywords:} generalized Poisson integral, sharp estimate for the gradient, half-space
\\
\\
{\bf AMS Subject Classification:} Primary: 31B10; Secondary: 31C99
\\
\\ 
\setcounter{equation}{0}
\section{Introduction}

In the paper \cite{KM2} (see also \cite{KM5}) a representation for the sharp coefficient
${\mathcal K}_p( x)$ in the inequality
$$
\left |\nabla u(x)\right |\leq {\mathcal K}_p( x)\big|\!\big |u \big |\!\big |_p
$$
was found, where $u$ is harmonic function in the half-space
${\mathbb R}^{n+1} _{+}=\big \lbrace x=(x', x_{n+1}): x'\in {\mathbb R}^n, x_{n+1} > 0\big \rbrace$,
represented by the Poisson integral with
boundary values in $L^p({\mathbb R}^n)$,  $||\cdot ||_p$ is the norm in
$L^p({\mathbb R}^n)$, $1\leq p \leq \infty$, $x \in {\mathbb R}^{n+1} _{+}$. It was shown that
$$
{\mathcal K}_p( x)=\frac{K_p}{x_{n+1}^{(n+p)/p}}
$$
and explicit formulas for $K_1$ and $K_2$ were given. Namely,
$$
K_1=\frac{2 n}{\omega_{n+1}}
\;,\;\;\;\;\;\;\;\;\;\;\;
K_2=\sqrt{\frac{n(n+1)}{2^{n+1} \omega_{n+1}}}\;,
$$
where $\omega_n=2\pi^{n/2}/\Gamma(n/2)$ is the area of the unit sphere in ${\mathbb R}^n$.

In \cite{KM2} it was shown that the sharp coefficients
in pointwise estimates for the absolute value of the normal derivative and 
the modulus of the gradient of a harmonic function in the half-space coincide for the case $p=1$ as well as for the case $p=2$.

Similar results for the gradient and the radial derivative of a harmonic function in the multidimensional ball with 
boundary values from $L^p$ for $p=1, 2$ in \cite{KM3} were obtained. 

Thus, the $L^1, L^2$-analogues of Khavinson's problem \cite{KHAV} were solved in \cite{{KM2},{KM3}} 
for harmonic functions in the multidimensional half-space and the ball.

We note that explicit sharp coefficients in the inequality for the first derivative of analytic function in the half-plane and the disk 
with boundary values of the real-part from $L^p$ in \cite{KM1,KM4,K1} were found.

\smallskip
In this paper we treat a generalization of the problem considered in our work \cite{KM2}.
Here we consider the generalized Poisson integral  
$$
u_{_f}(x)=k_{n, \alpha }\int _{{\mathbb R}^n}\frac{x_{n+1}^\alpha}{|y-x|^{n+\alpha}}\;f(y')dy'
$$
with $f\in L^p({\mathbb R}^n)$, $\alpha >-(n/p)$, $1\leq  p\leq \infty $, where $x\in{\mathbb R}^{n+1} _{+}$, 
$y=(y', 0)$, $y' \in {\mathbb R}^n$, and $k_{n, \alpha }$ is a normalization constant. 
In the case $\alpha=1$ the last integral coincides with the Poisson integral for a half-space.

\smallskip
In Section 2 we obtain a representation for the sharp coefficient ${\mathcal C}_p(x)$
in the inequality
$$
\left |\nabla u_{_f}(x)\right |\leq {\mathcal C}_p( x)\big|\!\big |f \big |\!\big |_p\;,
$$
where 
$$
{\mathcal C}_p( x)=\frac{C_p}{x_{n+1}^{(n+p)/p}}
$$
and the constant $C_p$ is characterized in terms of an extremal problem on the unit sphere ${\mathbb S}^n$ in ${\mathbb R}^{n+1}$.

\smallskip
In Section 3 we reduce this extremal problem to that of finding of the supremum
of a certain double integral, depending on a scalar parameter and show that
$$
C_1=k_{n, \alpha }n
$$
if $-n<\alpha \leq n$, and
$$
C_2=\sqrt{\omega_{n-1}}k_{n, \alpha }\left \{\frac{\sqrt{\pi}(n+\alpha)n(n+2)\Gamma\left (\frac{n}{2}-1 \right )
\Gamma\left (\frac{n}{2}+\alpha \right )}{8(n+1+\alpha)\Gamma(n+\alpha)}\right \}^{1/2}
$$
if $-(n/2)<\alpha \leq n(n+1)/2$.

\smallskip
It is shown that the sharp coefficients in pointwise estimates for the absolute value of the normal derivative and 
the modulus of the gradient of the generalized Poisson integral for a half-space coincide in the case $p=1$ as well as in the case $p=2$.

\section{Representation for the sharp constant in inequality for the gradient in terms of an extremal problem on the unit sphere}

We introduce some notation used henceforth.
Let ${\mathbb R}^{n+1} _{+}=\big \lbrace x=(x', x_{n+1}): x'=(x_1,\dots,x_n)
\in {\mathbb R}^n, x_{n+1} > 0\big \rbrace$, ${\mathbb S}^n=
\{ x \in {\mathbb R}^{n+1}: |x|=1 \}$,
${\mathbb S}^n_+= \{ x \in {\mathbb R}^{n+1}: |x|=1,\; x_{n+1} >0 \}$ and
${\mathbb S}^n_{-}= \{ x \in {\mathbb R}^{n+1}: |x|=1,\; x_{n+1}<0 \}$.
Let $\bs e_\sigma $ stand for the
$n+1$-dimensional unit vector joining the origin to a point $\sigma $
on the sphere ${\mathbb S}^n$.

By $||\cdot ||_p$ we denote the norm in the space $L^p({\mathbb R}^n)$, that is
$$
|| f||_p=\left \{\int _ {{\mathbb R}^n } |f(x')|^p \;dx' \right \}^{1/p},
$$
if $1\leq  p< \infty $, and $|| f||_\infty =\mbox{ess}\;\sup \{ | f(x') |:
x' \in {\mathbb R}^n \}$.

Let the function in ${\mathbb R}^{n+1}_{+}$ be represented as the generalized Poisson integral
\begin{equation} \label{EH_1}
u_{_f}(x)=k_{n, \alpha }\int _{{\mathbb R}^n}\frac{x_{n+1}^\alpha}{|y-x|^{n+\alpha}}\;f(y')dy'
\end{equation}
with $f\in L^p({\mathbb R}^n)$, $1\leq  p\leq \infty $, where $y=(y', 0)$, $y' \in {\mathbb R}^n$, 
\begin{equation} \label{EQU_01}
k_{n, \alpha }=\left \{ \int _{{\mathbb R}^n}\frac{x_{n+1}^\alpha}{|y-x|^{n+\alpha}}dy' \right \}^{-1}=
\frac{\Gamma\left (\frac{n+\alpha}{2} \right )}{\pi^{n/2}\Gamma\left (\frac{\alpha}{2}\right )}\;, 
\end{equation}
and
\begin{equation} \label{EQU_1}
\alpha >-\frac{n}{p}\;. 
\end{equation}

Now, we find a representation for the best coefficient ${\mathcal C}_p( x; \bs z)$
in the inequality for the absolute value of derivative of $u_{_f}(x)$ 
in an arbitrary direction $\bs z \in {\mathbb S}^n$, $x\in {\mathbb R}^{n+1}_{+}$.
In particular, we obtain a formula for the constant in a similar inequality for the
modulus of the gradient.

\begin{proposition} \label{P_1}
Let $x $ be  an arbitrary point in ${\mathbb R}^{n+1} _+$ and let $\bs z\in {\mathbb S}^n$. 
The sharp coefficient ${\mathcal C}_p (x; \bs z)$ in the inequality
$$
|\left ( \nabla u_{_f}(x), \bs z \right ) |\leq {\mathcal C}_p( x; \bs z)
\big|\!\big |f \big |\!\big |_p
$$
is given by
\begin{equation} \label{EH_2A}
{\mathcal C}_p( x; \bs z)= \frac{C_p(\bs z)}{x_{n+1}^{(n+p)/p}},
\end{equation}
where
\begin{equation} \label{EH_2B}
C_1(\bs z)=k_{n, \alpha }\sup _{\sigma \in {\mathbb S}^n_+}
\big |\big (\alpha\bs e_{n+1} -(n+\alpha)(\bs e_{\sigma}, \bs e_{n+1})\bs e_{\sigma},\; \bs z \big )\big |
\big (\bs e_{\sigma}, \bs e_{n+1} \big )^{n+\alpha} ,
\end{equation}
\begin{equation} \label{EH_3DA}
C_p(\bs z)\!=\!k_{n, \alpha}\!
\left \{ \int _ {{\mathbb S}^n_+ }\!\!
\big |\!\big (\alpha\bs e_{n+1} \!-\!(n\!+\!\alpha)(\bs e_{\sigma}, \bs e_{n+1})\bs e_{\sigma}, \bs z \big )\!\big |^{\frac{p}{p\!-\!1}}
\big (\bs e_{\sigma}, \bs e_{n+1} \big )^{\frac{(\alpha\!-\!1)p\!+\!n\!+\!1}{p-1}}d\sigma \!\right \}^{\frac{p\!-\!1}{p}}
\end{equation}
for $1<p<\infty $,
and
\begin{equation} \label{EH_3H}
C_\infty(\bs z)=k_{n, \alpha}
\int _ {{\mathbb S}^n_+ }
\big |\big (\alpha\bs e_{n+1} -(n+\alpha)(\bs e_{\sigma}, \bs e_{n+1})\bs e_{\sigma},\; \bs z \big )\big |
\big (\bs e_{\sigma}, \bs e_{n+1} \big )^{\alpha-1}\;d\sigma .
\end{equation}

In particular, the sharp coefficient ${\mathcal C}_p (x)$ in the inequality
$$
\left |\nabla u_{_f}(x)\right |\leq {\mathcal C}_p( x)\big|\!\big |f \big |\!\big |_p
$$
is given by
\begin{equation} \label{EH_3J}
{\mathcal C}_p(x)=\frac{C_p}{x_{n+1}^{(n+p)/p}},
\end{equation}
where
\begin{equation} \label{EH_4J}
C_p=\sup _{|\bs z|=1}C_p(\bs z).
\end{equation}
\end{proposition}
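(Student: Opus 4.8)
The plan is to differentiate the generalized Poisson integral \eqref{EH_1} under the integral sign, estimate the resulting linear functional on $L^p({\mathbb R}^n)$ by duality, and then pass to spherical coordinates to recognize the kernel norm as an integral over the upper half-sphere ${\mathbb S}^n_+$. First I would fix $x\in{\mathbb R}^{n+1}_+$ and $\bs z\in{\mathbb S}^n$ and compute
$$
\big(\nabla u_{_f}(x),\bs z\big)=k_{n,\alpha}\int_{{\mathbb R}^n}\big(\nabla_x\,\Phi_\alpha(x,y'),\bs z\big)f(y')\,dy',
\qquad \Phi_\alpha(x,y')=\frac{x_{n+1}^\alpha}{|y-x|^{n+\alpha}},
$$
where $y=(y',0)$. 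A direct computation gives $\nabla_x\Phi_\alpha(x,y')=|y-x|^{-n-\alpha-2}x_{n+1}^{\alpha-1}\big(\alpha|y-x|^2\bs e_{n+1}-(n+\alpha)x_{n+1}(x-y)\big)$. Differentiation under the integral is justified because $f\in L^p$ and, by \eqref{EQU_1}, the kernel and its gradient have enough decay in $y'$; I would note this and move on rather than dwell on it.

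Next, by the sharp form of Hölder's inequality (i.e.\ the identification of the norm of a linear functional on $L^p$ with the $L^{p'}$-norm of its kernel, $p'=p/(p-1)$, with the obvious modifications for $p=1$ and $p=\infty$), the sharp coefficient is
$$
{\mathcal C}_p(x;\bs z)=k_{n,\alpha}\big\|\,y'\mapsto\big(\nabla_x\Phi_\alpha(x,y'),\bs z\big)\big\|_{p'}.
$$
For $p=1$ this $p'=\infty$ norm is the essential supremum over $y'\in{\mathbb R}^n$; for $1<p<\infty$ it is the $L^{p/(p-1)}$-norm; for $p=\infty$ it is the $L^1$-norm. The extremizing $f$ is, as usual, (a unimodular multiple of) $\operatorname{sgn}$ of the kernel for $p=1,\infty$ and proportional to $|\text{kernel}|^{p'-1}\operatorname{sgn}(\text{kernel})$ for $1<p<\infty$; I would record this so that the claimed constants are genuinely attained (or attained in the limit), which is what "sharp" means here.

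Then I would perform the change of variables $y'\mapsto\sigma$ that maps ${\mathbb R}^n$ onto the lower half-sphere, or rather the substitution making the point $\bs e_\sigma$ the unit vector in the direction of $x-y$. Writing $x=(0',x_{n+1})$ without loss of generality (the expressions are translation-invariant in $x'$ and the stated homogeneity in $x_{n+1}$ then follows by scaling $y'\mapsto x_{n+1}y'$), one has $x-y=(-y',x_{n+1})$, $|x-y|^2=|y'|^2+x_{n+1}^2$, and the outward unit vector from $y$ to $x$ is $\bs e_\sigma=(x-y)/|x-y|$ with $(\bs e_\sigma,\bs e_{n+1})=x_{n+1}/|x-y|>0$, so $\sigma$ ranges over ${\mathbb S}^n_+$. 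Under this map $x_{n+1}/|x-y|=\cos\theta$ where $\theta$ is the polar angle, and the Euclidean measure $dy'$ on ${\mathbb R}^n$ pulls back, after the scaling normalization, to $(\bs e_\sigma,\bs e_{n+1})^{-(n+1)}\,x_{n+1}^n\,d\sigma$ (this is the standard stereographic-type Jacobian: $|x-y|^{n+1}=x_{n+1}^{n+1}(\bs e_\sigma,\bs e_{n+1})^{-(n+1)}$ and $dy'=|x-y|^{n+1}x_{n+1}^{-1}d\sigma$ up to the sign, giving $dy'=x_{n+1}^{n}(\bs e_\sigma,\bs e_{n+1})^{-(n+1)}d\sigma$). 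Substituting and collecting the powers of $(\bs e_\sigma,\bs e_{n+1})=x_{n+1}/|x-y|$ from the kernel's $x_{n+1}^{\alpha-1}|x-y|^{-n-\alpha-2}$, from $|x-y|^2\bs e_{n+1}-x_{n+1}(x-y)=|x-y|^2(\bs e_{n+1}-(\bs e_\sigma,\bs e_{n+1})\bs e_\sigma)$ — wait, one must be careful: $\alpha|x-y|^2\bs e_{n+1}-(n+\alpha)x_{n+1}(x-y)=|x-y|^2\big(\alpha\bs e_{n+1}-(n+\alpha)(\bs e_\sigma,\bs e_{n+1})\bs e_\sigma\big)$ — and from the measure, yields exactly the exponents $(\alpha-1)p'+ \ldots$ appearing in \eqref{EH_3DA}, \eqref{EH_3H}, and the supremum form \eqref{EH_2B} for $p=1$; the factors of $x_{n+1}$ combine to the power $-(n+p)/p$ claimed in \eqref{EH_2A}. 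Finally \eqref{EH_3J}--\eqref{EH_4J} follow from $|\nabla u_{_f}(x)|=\sup_{|\bs z|=1}(\nabla u_{_f}(x),\bs z)$ and taking the supremum over $\bs z$ of the already-established \eqref{EH_2A}. \emph{The main obstacle} I expect is bookkeeping the Jacobian and the powers of $(\bs e_\sigma,\bs e_{n+1})$ correctly so that the exponent $\frac{(\alpha-1)p+n+1}{p-1}$ in \eqref{EH_3DA} comes out exactly right, together with cleanly justifying that the supremum over ${\mathbb S}^n_+$ (rather than an essential supremum over a measure-zero-boundary-modified set) is the correct sharp value for $p=1$ — i.e.\ a continuity/density argument showing the $L^\infty$ norm of the continuous kernel equals its pointwise supremum and that this supremum is attained in the limit by suitable $f$'s.
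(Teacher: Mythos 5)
Your proposal is correct and follows essentially the same route as the paper: differentiate under the integral sign, identify the sharp coefficient with the $L^{p/(p-1)}$-norm (or supremum, or $L^1$-norm) of the kernel by duality, and pass to the unit sphere via the solid-angle change of variables $dy'=x_{n+1}^{n}(\bs e_\sigma,\bs e_{n+1})^{-(n+1)}d\sigma$, which reproduces the exponents in (\ref{EH_2B})--(\ref{EH_3H}) and the factor $x_{n+1}^{-(n+p)/p}$; the only cosmetic difference is that you use $\bs e_\sigma=(x-y)/|x-y|$ and land directly on ${\mathbb S}^n_+$, whereas the paper works on ${\mathbb S}^n_-$ with $\bs e_{xy}=(y-x)/|y-x|$ and then reflects. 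Your final step of taking the supremum over $\bs z$ is exactly the paper's permutation of suprema, so no gap remains.
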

\begin{proof} Let $x=(x', x_{n+1})$ be a fixed point in ${\mathbb R}^{n+1}_+$. 
The representation (\ref{EH_1}) implies
$$
\frac{\partial u_{_f}}{\partial x_i}=k_{n, \alpha }\int _{{\mathbb R}^n}
\left [ \frac{\delta_{ni}\alpha x_{n+1}^{\alpha-1}}{|y-x|^{n+\alpha}} + \frac{(n+\alpha) 
x_{n+1} ^\alpha(y_i-x_i)}{|y-x|^{n+2+\alpha}}  \right ]f(y')dy',
$$
that is
\begin{eqnarray*}
\nabla u_{_f}(x)&=&k_{n, \alpha }x_{n+1}^{\alpha-1}\int _{{\mathbb R}^n}
\left [\; \frac{\alpha\bs e_{n+1}}{|y-x|^{n+\alpha}} + \frac{(n+\alpha) x_{n+1} (y-x)}{|y-x|^{n+2+\alpha}}\; \right ]f(y')dy'\\
& &\\
&=&k_{n, \alpha }x_{n+1}^{\alpha-1}\int _{{\mathbb R}^n}
\frac{\alpha\bs e_{n+1}  - (n+\alpha)(\bs e_{xy}, \bs e_{n+1}) \bs e_{xy}}{|y-x|^{n+\alpha}}\; f(y')dy',
\end{eqnarray*}
where $\bs e_{xy}=(y-x)|y-x|^{-1}$.
For any $\bs z \in {\mathbb S}^n$,
\begin{equation} \label{EH_DIR}
(\nabla u_{_f}(x), \bs z)=
k_{n, \alpha }x_{n+1}^{\alpha-1}\int _{{\mathbb R}^n}
\frac{(\alpha\bs e_{n+1}  - (n+\alpha)\bs e_{xy}, \bs e_{n+1})\bs e_{xy},\; \bs z)}{|y-x|^{n+\alpha}}\; f(y')dy'.
\end{equation}
Hence,
\begin{equation} \label{EH_7AD}
{\mathcal C}_1( x; \bs z)=k_{n, \alpha }x_{n+1}^{\alpha-1}
\sup _{y \in {\partial\mathbb R}^n}
\frac{|(\alpha\bs e_{n+1}  - (n+\alpha)(\bs e_{xy}, \bs e_{n+1})\bs e_{xy},\; \bs z)|}{|y-x|^{n+\alpha}}\;,
\end{equation}
and
\begin{equation} \label{EH_6A}
{\mathcal C}_p( x; \bs z)=k_{n, \alpha }x_{n+1}^{\alpha-1}
\left \{\int _{{\mathbb R}^n}
\frac{\big |\big (\alpha\bs e_{n+1}  - (n+\alpha)(\bs e_{xy}, \bs e_{n+1})\bs e_{xy}, \bs z \big )\big |^q }
{|y-x|^{(n+\alpha)q}}\;dy' \right \}^{1/q}
\end{equation}
for $1<p \leq \infty $, where $p^{-1}+q^{-1}=1$.

Taking into account the equality
\begin{equation} \label{EH_COS}
\frac{x_{n+1}}{|y-x|}=(\bs e_{xy}, -\bs e_{n+1}),
\end{equation}
by (\ref{EH_7AD}) we obtain
\begin{eqnarray*}
{\mathcal C}_1( x; \bs z)
&=&k_{n, \alpha }x_{n+1}^{\alpha-1}\sup _{y \in \partial{\mathbb R}^n}
\frac{|(\alpha\bs e_{n+1}  - (n+\alpha)(\bs e_{xy}, \bs e_{n+1})\bs e_{xy},\; \bs z)|}{x_{n+1}^{n+\alpha}}
\left (\frac{x_{n+1}}{|y-x|}\right )^{n+\alpha} \\
& &\\
&=&\frac{k_{n, \alpha }}{x_{n+1}^{{n+1}}}\sup _{\sigma \in {\mathbb S}^n_-}
\big |\big (\alpha\bs e_{n+1} -(n+\alpha)(\bs e_{\sigma}, \bs e_{n+1})\bs e_{\sigma},\; \bs z \big )\big |\big (\bs e_{\sigma}, -\bs e_{n+1} 
\big )^{n+\alpha}.
\end{eqnarray*}
Replacing here $\bs e_\sigma$ by $-\bs e_\sigma$, we arrive at (\ref{EH_2A})
for $p=1$  with the sharp constant (\ref{EH_2B}).

Let $1<p \leq \infty $. Using (\ref{EH_COS}) and the equality
$$
\frac{1}{|y-x|^{(n+\alpha) q}}=\frac{1}{x_{n+1}^{(n+\alpha)q-n}}\left (\frac{x_{n+1}}{|y-x|} \right )^{(n+\alpha)q-n-1}
\frac{x_{n+1}}{|y-x|^{n+1}}\;,
$$
and replacing  $q$ by $p/(p-1)$ in (\ref{EH_6A}), we conclude that (\ref{EH_2A}) holds
with the sharp constant
$$
C_p(\bs z)=k_{n, \alpha }
\left \{ \int _ {{\mathbb S}^n_- }
\big |\big (\alpha\bs e_{n+1} -(n+\alpha)(\bs e_{\sigma}, \bs e_{n+1})\bs e_{\sigma},\; \bs z \big )\big |^{\frac{p}{p-1}}
\big (\bs e_{\sigma}, -\bs e_{n+1} \big )^{\frac{(\alpha-1) p+n+1}{p-1}}\;d\sigma \right \}^{\frac{p-1}{p}},
$$
where ${\mathbb S}^n_- =\{ \sigma \in {\mathbb S}^n: (\bs e_\sigma , \bs e_{n+1})<0 \}$.
Replacing here $\bs e_\sigma$ by $-\bs e_\sigma$, 
we arrive at (\ref{EH_3DA}) for $1<p<\infty$ and at (\ref{EH_3H}) for $p=\infty$.

By (\ref{EH_DIR}) we have
$$
\big |\nabla u_{_f}(x)\big |=k_{n, \alpha }x_{n+1}^{\alpha-1}\sup _{|\bs z|=1}\int _{{\mathbb R}^n}
\frac{(\alpha\bs e_{n+1}  - (n+\alpha)(\bs e_{xy}, \bs e_{n+1})\bs e_{xy},\; \bs z)}{|y-x|^{n+\alpha}}\; f(y')dy'.
$$
Hence, by the permutation of suprema, (\ref{EH_6A}), (\ref{EH_7AD}) and (\ref{EH_2A}),
\begin{eqnarray} \label{EH_6AJ}
{\mathcal C}_p( x)&=&k_{n, \alpha }x_{n+1}^{\alpha-1}\sup _{|\bs z|=1}
\left \{\int _{{\mathbb R}^{n+1}}
\frac{\big |\big (\alpha\bs e_{n+1}  - (n+\alpha)(\bs e_{xy}, \bs e_{n+1})\bs e_{xy}, \bs z \big )\big |^ q }
{|y-x|^{(n+\alpha)q}}\;dy' \right \}^{1/q} \nonumber\\
& &\nonumber\\
&=&\sup _{|\bs z|=1}{\mathcal C}_p( x; \bs z)=\sup _{|\bs z|=1}C_p(\bs z) x_{n+1}^{-(n+p)/p}
\end{eqnarray}
for $1<p \leq \infty $, and
\begin{eqnarray} \label{EH_7ADJ}
{\mathcal C}_1( x)&=&k_{n, \alpha }x_{n+1}^{\alpha-1}\sup _{|\bs z|=1}\;\sup _{y \in \partial{\mathbb R}^n}
\frac{|(\alpha\bs e_{n+1}  - (n+\alpha)(\bs e_{xy}, \bs e_{n+1})\bs e_{xy},\; \bs z)|}{|y-x|^{n+\alpha}}\nonumber\\
& &\nonumber\\
&=&\sup _{|\bs z|=1}{\mathcal C}_1( x; \bs z)=\sup _{|\bs z|=1}C_1(\bs z) x_{n+1}^{-(n+1)}.
\end{eqnarray}
Using the notation (\ref{EH_4J}) in (\ref{EH_6AJ}) and (\ref{EH_7ADJ}), we arrive at (\ref{EH_3J}).
\end{proof}

\begin{remark}. 
Formula (\ref{EH_3DA}) for the coefficient $C_p(\bs z), 1<p< \infty,$
can be written with the integral over the whole sphere ${\mathbb S}^n$
in ${\mathbb R}^{n+1}$,
$$
C_p(\bs z)=\frac{k_{n, \alpha }}{2^{(p-1)/p}}\!
\left \{ \!\int _ {{\mathbb S}^n }\!\!
\big |\!\big (\alpha\bs e_{n+1} \!-\!(n\!+\!\alpha)(\bs e_{\sigma}, \bs e_{n+1})\bs e_{\sigma}, \bs z \big )\!\big |^{\frac{p}{p\!-\!1}}
\big (\bs e_{\sigma}, \bs e_{n+1} \big )^{\frac{(\alpha\!-\!1)p\!+\!n\!+\!1}{p-1}}d\sigma \!\right \}^{\frac{p\!-\!1}{p}}
$$
A similar remark relates (\ref{EH_3H}):
\begin{equation} \label{EH_3HW}
C_\infty(\bs z)=\frac{k_{n, \alpha }}{2}
\int _ {{\mathbb S}^n}
\big |\big (\alpha\bs e_{n+1} -(n\!+\!\alpha)(\bs e_{\sigma}, \bs e_{n+1})\bs e_{\sigma},\; \bs z \big )\big |
\big |\big (\bs e_{\sigma}, \bs e_{n+1} \big )\big |^{\alpha-1}\;d\sigma , 
\end{equation}
as well as formula (\ref{EH_2B}):
$$
C_1(\bs z)=k_{n, \alpha }\sup _{\sigma \in {\mathbb S}^n}
\big |\big (\alpha\bs e_{n+1} -(n\!+\!\alpha)(\bs e_{\sigma}, \bs e_{n+1})\bs e_{\sigma},\; 
\bs z \big )\big |\big |\big (\bs e_{\sigma}, \bs e_{n+1} \big )\big |^{n+\alpha}\;.
$$
\end{remark}

\section{Reduction of the extremal problem to finding of the supremum by parameter of a double integral. 
The cases $p=1$ and $p=2$}

The next assertion is based on the representation for $C_p$,
obtained in Proposition \ref{P_1}.

\begin{proposition} \label{P_2} Let $f \in L^p({\mathbb R}^n)$,
and let  $x $ be  an arbitrary point in ${\mathbb R}^{n+1} _+$. 
The sharp coefficient ${\mathcal C}_p (x)$ in the inequality
\begin{equation} \label{EH_2_3}
|\nabla u_{_f}(x)|\leq {\mathcal C}_p( x)\big|\!\big |f \big |\!\big |_p
\end{equation}
is given by
\begin{equation} \label{EH_2A_3A}
{\mathcal C}_p( x)=\frac{C_p}{x_{n+1}^{(n+p)/p}}\;,
\end{equation}
where
\begin{equation} \label{EH_1ABC}
C_p\!=\!(\omega_{n-1})^{(p-1)/p}k_{n, \alpha }
\sup _{\gamma \geq 0}\;\frac{1}{\sqrt{1+\gamma^2}}\left \{ \int _ {0}^{\pi}d\varphi
\int _ {0}^{\pi/2}{\mathcal F}_{n,p}(\varphi, \vartheta ; \gamma) \;
 d\vartheta\right \}^\frac{p-1}{p},
\end{equation}
if  $1<p<\infty $. Here
\begin{equation} \label{EH_1AC}
{\mathcal F}_{n,p}(\varphi, \vartheta ; \gamma)
=\big |{\mathcal G}_{n}(\varphi, \vartheta ; \gamma) \big |^{p / (p-1)}
\cos^{((\alpha-1)p+n+1) / (p-1)}\vartheta \sin^{n-1}\vartheta \sin^{n-2}\varphi
\end{equation}
with
\begin{equation} \label{E_AUX}
{\mathcal G}_{n}(\varphi, \vartheta ; \gamma)= \big ((n+\alpha)\cos^2 \vartheta -\alpha\big )+\gamma(n+\alpha)\cos \vartheta
\sin \vartheta\cos \varphi \;.
\end{equation}

In addition,
\begin{equation} \label{EH_2BAA}
C_1=k_{n, \alpha }n
\end{equation}
if $-n<\alpha \leq n$.

In particular,
$$
C_2=\sqrt{\omega_{n-1}}k_{n, \alpha }\left \{\frac{\sqrt{\pi}(n+\alpha)n(n+2)\Gamma\left (\frac{n}{2}-1 \right )
\Gamma\left (\frac{n}{2}+\alpha \right )}{8(n+1+\alpha)\Gamma(n+\alpha)}\right \}^{1/2}
$$
for $-(n/2)<\alpha \leq n(n+1)/2$.

For $p=1$ and $p=2$ the coefficient ${\mathcal C}_p( x)$ is sharp in conditions of the Proposition also in
the weaker inequality obtained from $(\ref{EH_2_3})$ by replacing $\nabla u_{_f}$ by
$\partial u_{_f} /\partial x_{n+1}$.
\end{proposition}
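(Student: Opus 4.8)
The plan is to start from the sphere representation for $C_p$ in Proposition~\ref{P_1}, namely $C_p=\sup_{|\bs z|=1}C_p(\bs z)$ with $C_p(\bs z)$ given by the integral over ${\mathbb S}^n_+$ in \eqref{EH_3DA}, and convert it into the parametrized double integral \eqref{EH_1ABC}. First I would parametrize a unit vector $\bs e_\sigma\in{\mathbb S}^n_+$ by its angle $\vartheta\in[0,\pi/2)$ from $\bs e_{n+1}$, so that $(\bs e_\sigma,\bs e_{n+1})=\cos\vartheta$, and write $\bs e_\sigma=\cos\vartheta\,\bs e_{n+1}+\sin\vartheta\,\bs\omega$ with $\bs\omega$ a unit vector in ${\mathbb R}^n$. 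The integrand depends on $\bs z$ only through the scalar product against the fixed vector $\alpha\bs e_{n+1}-(n+\alpha)\cos\vartheta\,\bs e_\sigma$; since both this vector and $\bs e_\sigma$ lie in the plane spanned by $\bs e_{n+1}$ and $\bs\omega$, one sees that the optimal $\bs z$ must lie in that same two-dimensional plane, so one may write $\bs z=(\cos\psi)\bs\omega_0+(\sin\psi)\bs e_{n+1}$ for a reference direction, and parametrize the ratio of the two components by a single scalar $\gamma\ge 0$ (essentially $\gamma=\cot\psi$ or a tangent), which explains the factor $1/\sqrt{1+\gamma^2}$ and the supremum over $\gamma$ in \eqref{EH_1ABC}. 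Carrying the surface measure $d\sigma=\sin^{n-1}\vartheta\,d\vartheta\,d\sigma_{n-1}$ through, and then splitting the inner sphere ${\mathbb S}^{n-1}$ with the angle $\varphi$ to $\bs\omega_0$ via $d\sigma_{n-1}=\sin^{n-2}\varphi\,d\varphi\,d\sigma_{n-2}$, the trivial integration over ${\mathbb S}^{n-2}$ yields the factor $\omega_{n-1}$; collecting the powers of $\cos\vartheta$ from $(\bs e_\sigma,\bs e_{n+1})^{((\alpha-1)p+n+1)/(p-1)}$ and the explicit form of $(\alpha\bs e_{n+1}-(n+\alpha)\cos\vartheta\,\bs e_\sigma,\bs z)$ gives exactly ${\mathcal G}_n$ in \eqref{E_AUX} and ${\mathcal F}_{n,p}$ in \eqref{EH_1AC}. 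This establishes \eqref{EH_1ABC}; the case $p=\infty$ and the supremum form for $p=1$ follow from the same change of variables applied to \eqref{EH_3H} and \eqref{EH_2B}.

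Next I would treat $p=1$. Here $C_1=k_{n,\alpha}\sup_{\sigma\in{\mathbb S}^n_+}|\big(\alpha\bs e_{n+1}-(n+\alpha)\cos\vartheta\,\bs e_\sigma,\bs z\big)|\cos^{n+\alpha}\vartheta$, maximized also over $|\bs z|=1$. For fixed $\sigma$ the supremum over $\bs z$ is just the length of the vector $\bs v(\vartheta):=\alpha\bs e_{n+1}-(n+\alpha)\cos\vartheta\,\bs e_\sigma$, and a direct computation gives $|\bs v(\vartheta)|^2=\alpha^2-2\alpha(n+\alpha)\cos^2\vartheta+(n+\alpha)^2\cos^2\vartheta$. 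Thus I must maximize $h(t):=\big(\alpha^2-2\alpha(n+\alpha)t+(n+\alpha)^2 t\big)t^{n+\alpha}$ over $t=\cos^2\vartheta\in[0,1]$. Differentiating, $h'(t)=t^{n+\alpha-1}\big[(n+\alpha)\alpha^2-2\alpha(n+\alpha)^2 t+(n+\alpha)^3 t-2\alpha(n+\alpha)t+(n+\alpha)^2 t\big]$, and one checks that under the hypothesis $-n<\alpha\le n$ the bracket is nonnegative on $[0,1]$ (the coefficient arithmetic reduces to a linear function of $t$ whose value at $t=1$ is $(n+\alpha)(n-\alpha)\cdot(\text{something}\ge 0)$, vanishing when $\alpha=n$), so the maximum is attained at $t=1$, i.e.\ $\vartheta=0$, giving $|\bs v(0)|^2=\alpha^2-2\alpha(n+\alpha)+(n+\alpha)^2=n^2$ and hence $C_1=k_{n,\alpha}n$, which is \eqref{EH_2BAA}. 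The restriction $\alpha\le n$ is exactly the threshold where the interior critical point leaves $[0,1]$, so this is where the hypothesis enters.

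For $p=2$ the exponent $p/(p-1)=2$ makes $|{\mathcal G}_n|^2$ a polynomial, so the double integral in \eqref{EH_1ABC} becomes an explicitly computable quadratic in $\gamma$: expanding ${\mathcal G}_n^2=\big((n+\alpha)\cos^2\vartheta-\alpha\big)^2+2\gamma(n+\alpha)\big((n+\alpha)\cos^2\vartheta-\alpha\big)\cos\vartheta\sin\vartheta\cos\varphi+\gamma^2(n+\alpha)^2\cos^2\vartheta\sin^2\vartheta\cos^2\varphi$, the $\cos\varphi$ cross term integrates to zero over $\varphi\in[0,\pi]$ against $\sin^{n-2}\varphi$, leaving $C_2(\gamma)^2=(\omega_{n-1})k_{n,\alpha}^2\,\dfrac{A+\gamma^2 B}{1+\gamma^2}$ for constants $A,B$ given by one-dimensional Beta-integrals in $\vartheta$ and $\varphi$; the supremum over $\gamma\ge 0$ is $\max(A,B)$, attained at $\gamma=0$ if $A\ge B$ and as $\gamma\to\infty$ otherwise. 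I would compute $A=\int_0^\pi\sin^{n-2}\varphi\,d\varphi\int_0^{\pi/2}\big((n+\alpha)\cos^2\vartheta-\alpha\big)^2\cos^{n+\alpha-1}\vartheta\sin^{n-1}\vartheta\,d\vartheta$ and $B=\int_0^\pi\cos^2\varphi\sin^{n-2}\varphi\,d\varphi\int_0^{\pi/2}(n+\alpha)^2\cos^{n+\alpha+1}\vartheta\sin^{n+1}\vartheta\,d\vartheta$ using $\int_0^{\pi/2}\cos^a\vartheta\sin^b\vartheta\,d\vartheta=\tfrac12 B(\tfrac{a+1}{2},\tfrac{b+1}{2})$ and the analogous formula on $[0,\pi]$ for $\varphi$; after simplification with the Gamma-function identities (using $\Gamma(z+1)=z\Gamma(z)$ repeatedly) the value $B$ turns out to dominate, and plugging $\sup_\gamma=B$ into $C_2=\sqrt{\omega_{n-1}}\,k_{n,\alpha}\sqrt{B}$ yields the stated closed form, with the condition $-(n/2)<\alpha\le n(n+1)/2$ being precisely $\alpha>-(n/2)$ (convergence, from \eqref{EQU_1} with $p=2$) together with the inequality $B\ge A$. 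The last sentence of the Proposition — that $\partial u_{_f}/\partial x_{n+1}$ gives the same constant — follows because in both cases ($p=1$ at $\vartheta=0$, and $p=2$ with the extremizer $\gamma\to\infty$ forcing $\bs z$ toward $\bs e_{n+1}$ after tracking the parametrization, or more directly by checking that the choice $\bs z=\bs e_{n+1}$ already realizes the supremum in \eqref{EH_4J}), the extremal direction $\bs z$ can be taken to be $\pm\bs e_{n+1}$, so the gradient estimate and the normal-derivative estimate have the same sharp coefficient. The main obstacle I anticipate is the bookkeeping in reducing the $\bs z$-optimization to the single parameter $\gamma$ cleanly (justifying that the extremal $\bs z$ lies in the relevant plane and identifying $\gamma$ with the right trigonometric quantity), together with the somewhat delicate sign analysis showing $B\ge A$ in the $p=2$ case and the nonnegativity of the bracket in the $p=1$ case under the stated ranges of $\alpha$; the integral evaluations themselves are routine Beta-function calculus.
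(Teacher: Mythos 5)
Your reduction of \eqref{EH_4J} to the double integral \eqref{EH_1ABC} is correct and essentially equivalent to the paper's, though by a slightly different route: you use geodesic polar coordinates on ${\mathbb S}^n_+$ directly (writing $\bs e_\sigma=\cos\vartheta\,\bs e_{n+1}+\sin\vartheta\,\bs\omega$ and integrating out ${\mathbb S}^{n-2}$), whereas the paper projects ${\mathbb S}^n_+$ onto the ball ${\mathbb B}^n$ and invokes the Prudnikov--Brychkov--Marichev formula before substituting $r=\sin\vartheta$; both yield \eqref{EH_1AC}--\eqref{E_AUX} with $\gamma=|\bs z'|/z_{n+1}$. (Your phrase ``the optimal $\bs z$ must lie in that plane'' should be replaced by the rotational-symmetry argument: the integral depends on $\bs z$ only through $z_{n+1}$ and $|\bs z'|$.) The $p=1$ argument also matches the paper's in substance: for fixed $\sigma$ the supremum over $\bs z$ is $\big(\alpha^2+(n^2-\alpha^2)\cos^2\vartheta\big)^{1/2}$, and for $-n<\alpha\le n$ this times $\cos^{n+\alpha}\vartheta$ is maximized at $\vartheta=0$, giving $C_1=k_{n,\alpha}n$; your side remark that $\alpha\le n$ is ``exactly the threshold'' is inaccurate (it is only a sufficient condition making the function monotone), but harmless for the stated range.

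The $p=2$ part contains a genuine error. With $A$ the coefficient of $\gamma^0$ and $B$ the coefficient of $\gamma^2$ (the paper's ${\mathcal I}_1$ and ${\mathcal I}_2$), one has $\sup_{\gamma\ge0}(A+\gamma^2B)/(1+\gamma^2)=\max\{A,B\}$, and the Beta-integral computation gives ${\mathcal I}_1/{\mathcal I}_2=n(n+2)/(n+2\alpha)$; hence the hypothesis $-(n/2)<\alpha\le n(n+1)/2$ is exactly the condition $A\ge B$, the supremum is attained at $\gamma=0$, and $C_2=\sqrt{\omega_{n-1}}\,k_{n,\alpha}\,{\mathcal I}_1^{1/2}$. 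You assert the opposite — that ``$B$ turns out to dominate'' and $C_2=\sqrt{\omega_{n-1}}\,k_{n,\alpha}\sqrt{B}$ with the extremum as $\gamma\to\infty$ — which is wrong on this range (it would give a strictly smaller constant whenever $\alpha<n(n+1)/2$ and does not reduce to the stated closed form, nor to the known value $K_2$ at $\alpha=1$). The error also propagates into your justification of the last claim: since $\gamma=|\bs z'|/z_{n+1}$, letting $\gamma\to\infty$ pushes $\bs z$ toward the tangential directions, not toward $\bs e_{n+1}$; the correct mechanism, as in the paper, is that the maximizing value is $\gamma=0$, i.e. $\bs z=\bs e_{n+1}$, whence $C_2=C_2(\bs e_{n+1})$ and the normal-derivative inequality has the same sharp coefficient (for $p=1$ the analogous check $C_1=C_1(\bs e_{n+1})$, which the paper does via $C_1(\bs e_{n+1})\ge k_{n,\alpha}n$, you do handle correctly). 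A further slip: at $p=2$ the exponent $((\alpha-1)p+n+1)/(p-1)$ equals $n+2\alpha-1$, so the weights in $A$ and $B$ must be $\cos^{n+2\alpha-1}\vartheta$ and $\cos^{n+2\alpha+1}\vartheta$, not $\cos^{n+\alpha-1}\vartheta$ and $\cos^{n+\alpha+1}\vartheta$ as you wrote.
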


\begin{proof} The equality (\ref{EH_2A_3A}) was proved in Proposition \ref{P_1}.

(i) Let $p=1$. Using (\ref{EH_2B}), (\ref{EH_4J}) and the permutability of two
suprema, we find
\begin{eqnarray} \label{EH_2BP}
C_1&=&k_{n, \alpha}\sup_{|\bs z|=1}\sup _{\sigma \in {\mathbb S}^n_+ }
\big |\big (\alpha\bs e_{n+1} -(n+\alpha)(\bs e_{\sigma}, \bs e_{n+1})\bs e_{\sigma},\; \bs z \big )\big |\big (\bs e_{\sigma}, \bs e_{n+1} \big )^{n+\alpha} \nonumber\\
& &\nonumber\\
&=&k_{n, \alpha}\sup _{\sigma \in {\mathbb S}^n_+ }
\big |\alpha\bs e_{n+1} -(n+\alpha)(\bs e_{\sigma}, \bs e_{n+1})\bs e_{\sigma}\big |\big (\bs e_{\sigma}, \bs e_{n+1} \big )^{n+\alpha}  \;.
\end{eqnarray}
Taking into account the equality
\begin{eqnarray*}
& &\big |\alpha\bs e_{n+1} -(n+\alpha)(\bs e_{\sigma}, \bs e_{n+1})\bs e_{\sigma}\big |\\
& &\\
& &=\Big (\alpha\bs e_{n+1} -(n+\alpha)(\bs e_{\sigma}, \bs e_{n+1})\bs e_{\sigma}, 
\;\alpha\bs e_{n+1} -(n+\alpha)(\bs e_{\sigma}, \bs e_{n+1})\bs e_{\sigma} \Big )^{1/2}\\
& &\\
& &=\Big (\alpha^2+((n+\alpha)^2-2\alpha(n+\alpha) )(\bs e_{\sigma},\; \bs e_{n+1})^2 \Big )^{1/2},
\end{eqnarray*}
and using (\ref{EQU_1}), (\ref{EH_2BP}), we arrive at the sharp constant (\ref{EH_2BAA}) for $-n<\alpha\leq n$.

Furthermore, by (\ref{EH_2B}),
$$
C_1(\bs e_{n+1})=k_{n, \alpha}\sup _{\sigma \in {\mathbb S}^n_+ }
\big |\alpha -(n+\alpha)(\bs e_{\sigma}, \bs e_{n+1})^2 |\big (\bs e_{\sigma}, \bs e_{n+1} \big )^{n+\alpha} \geq
k_{n, \alpha}n.
$$
Hence, by $C_1 \geq C_1(\bs e_{n+1})$ and by (\ref{EH_2BAA}) we obtain $C_1=C_1(\bs e_{n+1})$,
which completes the proof in the case $p=1$.

\medskip
(ii) Let $1<p<\infty $. Since the integrand in (\ref{EH_3DA}) does not
change when $\bs z \in {\mathbb S}^n$ is replaced by $-\bs z$,
we may assume that $z_{n+1}=(\bs e_{n+1}, \bs z) > 0$ in (\ref{EH_4J}).

Let $\bs z'=\bs z-z_{n+1}\bs e_{n+1}$. Then $(\bs z', \bs e_{n+1})=0$ and hence
$z^2_{n+1}+|\bs z'|^2=1$.
Analogously, with
$\sigma=(\sigma_1,\dots,\sigma_n,\sigma_{n+1}) \in {\mathbb S}^n_+$,
we associate the vector $\bs \sigma '=\bs e_\sigma -\sigma_{n+1}\bs e_{n+1}$.

Using the equalities $(\bs \sigma ', \bs e_{n+1})=0$,
$\sigma_{n+1} =\sqrt{1-|\bs \sigma '|^2}$
and $(\bs z', \bs e_{n+1})=0$, we find an expression for
$(\alpha\bs e_{n+1} -(n+\alpha)(\bs e_{\sigma}, \bs e_{n+1})\bs e_{\sigma},\; \bs z \big )$
as a function of $\bs \sigma'$:
\begin{eqnarray}
& &(\alpha\bs e_{n+1} -(n+\alpha) (\bs e_{\sigma}, \bs e_{n+1})\bs e_{\sigma},\; \bs z \big )=
\alpha z_{n+1}-(n+\alpha)\sigma_{n+1}\big ( \bs e_{\sigma}, \bs z \big )\nonumber\\
& &=\alpha z_{n+1}-(n+\alpha)\sigma_{n+1}
\big ( \bs \sigma '+\sigma_{n+1} \bs e_{n+1},\; \bs z'+z_{n+1}\bs e_{n+1} \big )\nonumber\\
& &=\alpha z_{n+1}-(n+\alpha)\sigma_{n+1}
\big [ \big ( \bs \sigma ', \bs z' \big )+z_{n+1} \sigma_{n+1}\big ]\nonumber\\
& &=-\big [(n+\alpha)(1-|\bs\sigma '|^2) -\alpha \big ]z_{n+1}-(n+\alpha)\sqrt{1-|\bs\sigma '|^2}\;
\big ( \bs \sigma ', \bs z' \big ) .
\label{EH_B}
\end{eqnarray}

Let ${\mathbb B}^n=\{ x'=(x_1,\dots,x_n)\in {\mathbb R}^n: |x'|< 1 \}$.
By (\ref{EH_3DA}) and (\ref{EH_B}), taking into account that
$d\sigma=d\sigma '/\sqrt{1-|\bs\sigma '|^2}$, we may write (\ref{EH_4J}) as
\begin{eqnarray} \label{EH_B1C}
\hspace{-15mm}& &C_p=k_{n, \alpha}\sup _{\bs z \in {\mathbb S}^n_+}
\left \{ \int _ {{\mathbb B}^n}
\frac{{\mathcal H}_{n, p}\big  ( |\bs \sigma '|,  (\bs \sigma ',  \bs z') \big  )
\big ( 1-|\bs\sigma '|^2\big )^{(\alpha p+n+1)/2(p-1)}}
{\sqrt{1-|\bs\sigma '|^2}}\;d\sigma ' \right \}^{\frac{p-1}{p}} \nonumber \\
\hspace{-15mm}& &\nonumber \\
\hspace{-15mm}& &=k_{n, \alpha}\sup _{\bs z \in {\mathbb S}^n_+}\!\!
\left \{\! \int _ {{\mathbb B}^n}
\!\!{\mathcal H}_{n, p}\big  ( |\bs \sigma '|,  (\bs \sigma ',  \bs z') \big  )
\big ( 1\!-\!|\bs\sigma '|^2\big )^{((\alpha-2)p+n+2)/2(p-1)}d\sigma '\! \right \}^{\frac{p-1}{p}},
\end{eqnarray}
where
\begin{equation} \label{E_FH}
{\mathcal H}_{n, p}\big  ( |\bs \sigma '|,  (\bs \sigma ',  \bs z') \big  )
=\Big | \big [(n+\alpha)(1-|\bs\sigma '|^2) -\alpha \big ]z_{n+1}\!+\!(n+\alpha )
\sqrt{1-|\bs\sigma '|^2}\;\big ( \bs \sigma ', \bs z' \big )\Big |^{p/(p-1)} .
\end{equation}

Using the well known formula (see e.g. \cite{PBM}, \textbf{3.3.2(3)}),
$$
\int_{B^{n}}g\big (|\bs x|, (\bs a, \bs x)\big )dx=\omega_{n-1}\int_0^1 r^{n-1} dr
\int_0^\pi g\big ( r, |\bs a|r \cos \varphi \big )\sin ^{n-2}\varphi \;d\varphi \;,
$$
we obtain
\begin{eqnarray*} 
& &\hspace{-17mm}\int _ {{\mathbb B}^n}
{\mathcal H}_{n, p}\big  ( |\bs \sigma '|,  (\bs \sigma ',  \bs z') \big  )
\big ( 1-|\bs\sigma '|^2\big )^{((\alpha-2)p+n+2)/2(p-1)}\;d\sigma ' \nonumber\\
& &\hspace{-17mm}\\
& &\hspace{-17mm}=\omega_{n-1}\!\int^{1}_{0}\! r^{n-1}\big ( 1\!-\!r^2\big )^{((\alpha-2)p+n+2)/2(p-1)} dr
\!\int^{\pi}_{0}\!\!{\mathcal H}_{n, p}
\big  (r, r|\bs z'|\cos  \varphi \big  )\sin ^{n-2}\varphi d\varphi\;.
\end{eqnarray*}
Making the change of variable $r=\sin \vartheta $ in 
the right-hand side of the last equality, we find
\begin{eqnarray} \label{E_PBMD}
& &\hspace{-7mm}\int _ {{\mathbb B}^n}
{\mathcal H}_{n, p}\big  ( |\bs \sigma '|,  (\bs \sigma ',  \bs z') \big  )
\big ( 1-|\bs\sigma '|^2\big )^{\frac{(\alpha-2)p+n+2}{2(p-1)}}\;d\sigma ' \\
& &\hspace{-7mm}\nonumber\\
& &\hspace{-7mm}=\omega _{n-1}\int^{\pi}_{0}\!\! \sin ^{n-2}\varphi d\varphi \!\int^{\pi/2}_{0}\!\!
{\mathcal H}_{n, p}\big  (\sin \vartheta,\; |\bs z'|\sin \vartheta\cos  \varphi \big  )
\sin ^{n-1}\vartheta \cos^{\frac{(\alpha-1)p+n+1}{p-1}}\vartheta d\vartheta
\;,\nonumber
\end{eqnarray}
where, by (\ref{E_FH}),
$$
{\mathcal H}_{n, p}\big  (\sin \vartheta,\; |\bs z'|\sin \vartheta\cos  \varphi \big  )=
\Big |\big ( (n+\alpha )\cos^2 \vartheta -\alpha \big )z_{n+1}\!+\!(n+\alpha)|\bs z'|\cos \vartheta
\sin \vartheta\cos \varphi \Big |^{p/(p-1)}.
$$
Introducing here the parameter $\gamma =|\bs z'|/z_{n+1}$ and using
the equality $|\bs z'|^2+z^2_{n+1}=1$, we obtain
\begin{equation} \label{E_FHGH}
{\mathcal H}_{n, p}\big  (\sin \vartheta,\; |\bs z'|\sin \vartheta\cos  \varphi \big  )=
(1+\gamma^2)^{-p/2(p-1)}\big |{\mathcal G}_{n}(\varphi, \vartheta ; \gamma)\big |^{p/(p-1)},
\end{equation}
where ${\mathcal G}_{n}(\varphi, \vartheta ; \gamma)$ is given by (\ref{E_AUX}).

By (\ref{EH_B1C}), taking into account (\ref{E_PBMD}) and (\ref{E_FHGH}),
we arrive at (\ref{EH_1ABC}).

\medskip
(iii) Let $p=2$. By (\ref{EH_1ABC}), (\ref{EH_1AC}) and (\ref{E_AUX}),
\begin{equation} \label{EH_9ABC}
C_2= \sqrt{\omega_{n-1}}\;k_{n,\alpha}\;\sup _{\gamma \geq 0}\;\frac{1}
{\sqrt{1+\gamma^2}}\left \{ \int _ {0}^{\pi}d\varphi\int _ {0}^{\pi/2}\!\!
{\mathcal F}_{n,2}(\varphi, \vartheta ; \gamma)\; d\vartheta\right \}^{1/2},
\end{equation}
where
$$
\!{\mathcal F}_{n,2}(\varphi, \vartheta ; \gamma)\!=\!
\big [\big ( (n+\alpha )\cos^2 \vartheta \!-\!\alpha \big )\!+\!\gamma(n+\alpha )\cos \vartheta
\sin \vartheta\cos \varphi \big ]^{2}\! \cos^{n-1+2\alpha}\!\vartheta \!\sin^{n-1}\!\vartheta \sin ^{n-2}\!\varphi.
$$
The last equality and (\ref{EH_9ABC}) imply
\begin{equation} \label{EH_11ABCD}
C_2=\sqrt{\omega_{n-1}}\;k_{n,\alpha}\;\sup _{\gamma \geq 0}\;\frac{1}{\sqrt{1+\gamma^2}}\left \{{\mathcal I}_1+ \gamma^2 {\mathcal I}_2 \right \}^{1/2},
\end{equation}
where
\begin{eqnarray} \label{EH_9B}
\hspace{-15mm}& &{\mathcal I}_1=\int _ {0}^{\pi} \sin ^{n-2}\varphi\;d\varphi
\int _ {0}^{\pi/2}\big ((n+\alpha)\cos^2 \vartheta -\alpha\big )^2\sin^{n-1}
\vartheta \cos^{n-1+2\alpha} \vartheta \;d\vartheta \nonumber\\
\hspace{-15mm}& &\nonumber\\
\hspace{-15mm}& &=\frac{ \sqrt{\pi}n(n+2)(n+\alpha)\;\Gamma\left (\frac{n-1}{2} \right )\Gamma\left (\frac{n+2+\alpha}{2}
\right )}{4(n+2\alpha)(n+1+\alpha)\Gamma(n+\alpha)} 
\end{eqnarray}
and
\begin{eqnarray} \label{EH_9BA}
\hspace{-8mm}{\mathcal I}_2&=&(n+\alpha)^2\int _ {0}^{\pi}\sin ^{n-2}\varphi \cos^2 \varphi\; d\varphi
\int _ {0}^{\pi/2}\sin^{n+1}\vartheta \cos^{n+1+2\alpha} \vartheta
\;d\vartheta \nonumber\\
\hspace{-8mm}& &\nonumber\\
\hspace{-8mm}&=&\frac{\sqrt{\pi}\; (n+\alpha)\;\Gamma\left (\frac{n-1}{2} \right )
\Gamma\left (\frac{n+2+2\alpha}{2} \right )}{4(n+1+\alpha) \Gamma(n+\alpha)}.
\end{eqnarray}
By (\ref{EH_11ABCD}) we have
\begin{equation} \label{EQ_11ABCD}
C_2=\sqrt{\omega_{n-1}}\;k_{n,\alpha}\;\max \big \{ {\mathcal I}_1^{1/2},
{\mathcal I}_2^{1/2} \big  \}.
\end{equation}
Further, by (\ref{EH_9B}) and (\ref{EH_9BA}),
$$
\frac{{\mathcal I}_1}{{\mathcal I}_2}=\frac{n(n+2)}{n+2\alpha}.
$$
Therefore,
$$
\frac{{\mathcal I}_1}{{\mathcal I}_2}-1=\frac{n^2+n-2\alpha}{n+2\alpha}.
$$
Taking into account (\ref{EQ_11ABCD}) and that $n+2\alpha>0$ for $p=2$ by (\ref{EQU_1}), 
we see that inequality 
$$
\frac{{\mathcal I}_1}{{\mathcal I}_2}\geq 1
$$
holds for $\alpha\leq n(n+1)/2$. So, we arrive at the representation for $C_2$ with $-(n/2)<\alpha\leq n(n+1)/2$ 
given in formulation of the Proposition.

Since $\bs z \in {\mathbb S}^n$ and the supremum in $\gamma =|\bs z'|/z_{n+1}$ in
(\ref{EH_9ABC}) is attained for $\gamma=0$, we have $C_2=C_2(\bs e_{n+1})$ under requirements of the Proposition.
\end{proof}


\begin{thebibliography}{99}

\bibitem{KHAV} D. Khavinson, \textit{An extremal problem for harmonic functions in the ball}, 
Canad.  Math. Bull., \textbf{35} (1992), 218--220.

\bibitem{KM1} G. Kresin and V. Maz'ya, \textit{Sharp Real-Part Theorems. A Unified Approach},
Lect. Notes in Math., \textbf{1903}, Springer, Berlin, 2007.

\bibitem{KM2} G. Kresin and V. Maz'ya, \textit{Optimal estimates for the gradient of
harmonic functions in the multidimensional half-space}, Discrete and Continuous
Dynamical Systems, series B, \textbf{28} (2) (2010), 425-440.

\bibitem{KM3} G. Kresin and V. Maz'ya,  \textit{Sharp pointwise estimates for
directional derivatives of harmonic functions in a  multidimensional ball},
J. Math. Sc. (New York), \textbf{169}:2 (2010), 167--187.

\bibitem{KM4} G. Kresin and V. Maz'ya, \textit{Sharp  real-part  theorems  in  the  upper 
half-plane and similar estimates for harmonic functions}, J. Math. Sc. (New York),
\textbf{179}:1 (2011), 144--163.

\bibitem{KM5} G. Kresin and V. Maz'ya, \textit{Maximum Principles and Sharp 
Constants for Solutions of Elliptic and Parabolic Systems},
Math. Surveys and Monographs, \textbf{183}, Amer. Math. Soc., Providence, 
Rhode Island, 2012.

\bibitem {K1} G. Kresin, \textit{Sharp and maximized real-part estimates for derivatives of 
analytic functions in the disk}, Rendiconti Lincei - Matematica e Applicazioni, {\textbf 24}:1 (2013), 95--110.

\bibitem{PBM} A.P. Prudnikov, Yu.A. Brychkov and O.I. Marichev,
\textit{Integrals and Series, Vol. 1, Elementary Functions},
Gordon and Breach Sci. Publ., New York, 1986.

\end{thebibliography}
\end{document}